\newcommand{\la}{\langle}
\newcommand{\ra}{\rangle}
\newcommand{\mg}[1]{#1^{\times}}
\newcommand{\sq}[1]{#1^{\times 2}}
\renewcommand{\setminus}{\smallsetminus}
\DeclareMathOperator{\newsum}{\mathsf{\Sigma}}
\renewcommand{\sum}{\newsum}
\newcommand{\nat}{\mathbb{N}}
\newcommand{\lla}{\la\!\la}
\newcommand{\rra}{\ra\!\ra}
\newcommand{\brd}[1]{{\mathsf{Br}}_2(#1)}
\newcommand{\br}[1]{{\mathsf{Br}}(#1)}
\newcommand{\ind}{{\mathsf{ind}}}
\renewcommand{\inf}{\mathsf{inf}}
\renewcommand{\min}{\mathsf{min}}
\renewcommand{\sup}{\mathsf{sup}}
\newcommand{\tors}{\mathsf{tor}}
\newcommand{\matr}[2]{\mathbb{M}_{#1}(#2)}
\newcommand\restr[2]{{
  \left.\kern-\nulldelimiterspace 
  #1 
  \vphantom{\big|} 
  \right|_{#2} 
  }}
\renewcommand{\leq}{\leqslant}
\renewcommand{\geq}{\geqslant}
\renewcommand{\setminus}{\smallsetminus}
\renewcommand{\bmod}{\,\,\mathsf{mod}\,\,}
\newcommand{\vf}{\varphi}
\newcommand{\W}{\mathsf{W}}
\newcommand{\I}{\mathsf{I}}
\newcommand{\st}{\mathsf{st}}
\renewcommand{\log}{\mathsf{log}}
\newcommand{\N}{\mathsf{N}}
\newcommand{\T}{\mathsf{T}}
\newcommand{\trd}{\mathsf{Trd}}
\renewcommand{\deg}{\mathsf{deg}}
\newcommand{\cc}{\mathbb{C}}
\newcommand{\rr}{\mathbb{R}}
\newcommand{\zz}{\mathbb{Z}}
\newtheorem{thm*}{Theorem}
\numberwithin{equation}{section}
\newtheorem{thm}[equation]{Theorem}
\newtheorem{prop}[equation]{Proposition}
\newtheorem*{cor*}{Corollary}
\newtheorem*{conj*}{Conjecture}
\newtheorem*{qu*}{Question}
\theoremstyle{remark}
\newtheorem*{ex*}{Example}
\theoremstyle{plain}
\title{Fields with bounded Brauer $2$-torsion index} 
\dedicatory{To the memory of David W.~Lewis (1944-2021)}
\date{06.01.2023}
\author{Karim Johannes Becher}
\address{Universiteit Antwerpen, Department of Mathematics, Middelheim\-laan~1, 2020 Antwerp, Belgium.}
\email{karimjohannes.becher@uantwerpen.be}
\begin{document}


\begin{abstract}
It is shown that, over a field of characteristic not $2$, the dimension of an anisotropic quadratic Pfister form of trivial total signature is at most twice the dimension of some central division algebra of exponent $2$. The proof is based on computations with quadratic trace forms of central simple algebras.

\medskip\noindent
{\sc{Keywords:}} central simple algebra of exponent $2$, trace form, quaternion algebra, symbol length, quadratic Pfister form, Witt ring, powers of the fundamental ideal

\medskip\noindent
\sc{Classification (MSC 2020):} 11E04, 11E81, 16H05
\end{abstract}

\maketitle

\section{Introduction}

Let $F$ be a field. 
We denote by $\mg{F}$ the multiplicative group of $F$.
We denote by $\br{F}$ the Brauer group of $F$ and by $\brd{F}$ its $2$-torsion part.
Any element $\alpha\in\br{F}$ is the Brauer equivalence class of a unique central $F$-division algebra, and we denote  by $\ind(\alpha)$ the degree of this algebra.
 
We denote by $\nat$ the set of natural numbers including $0$.
Note that any element of $\brd{F}$ has index $2^n$ for some $n\in\nat$.
We define
\begin{eqnarray*}
\lambda'(F) & = & \sup\{\log_2(\ind (\alpha_{F}))\mid \alpha\in\brd{F}\}\,\in\,\nat\cup\{\infty\}\,
\end{eqnarray*}
and call this field invariant the \emph{Brauer $2$-torsion index of $F$}.

By Merkurjev's Theorem, every element of $\brd{F}$ is given by a tensor product of $F$-quaternion algebras.
For $\alpha\in\br{F}$ we define
$$\lambda^2(\alpha) \,=\, \min\,\{n\in\nat\mid \alpha\mbox{ is given by a tensor product of $n$ quaternion algebras}\}.$$
The \emph{$2$-symbol length of $F$} is defined as
\begin{eqnarray*}
\lambda^2(F) & = & \sup\{\lambda^2(\alpha)\mid \alpha\in\brd{F}\}\,\in\,\nat\cup\{\infty\}.
\end{eqnarray*}

We observe that
 $$\lambda'(F)\leq \lambda^2(F)\,.$$
A.A.~Albert showed in \cite[Theorem 6]{Alb32} that every central simple $F$-algebras of degree $4$ and exponent  $2$ is a  tensor product of two $F$-quaternion algebras. Therefore, when $\lambda'(F)\leq 2$, we have $\lambda^2(F)=\lambda'(F)$.
J.-P.~Tignol proved in \cite{Tig78} that any central simple $F$-algebra of exponent $2$ and index $8$ is equivalent to a tensor product of $4$ quaternion algebras.
Therefore, when $\lambda'(F)=3$, we have $3\leq \lambda^2(F)\leq 4$. 
No bounds on $\lambda^2(F)$ in terms of $\lambda'(F)$ are known when $\lambda'(F)>3$.

We assume in the sequel that $F$ has characteristic different from $2$.
We use standard terminology and notation in quadratic form theory, with a few exceptions which we mention explicitly.
For two quadratic forms $\vf$ and $\psi$, we write $\vf=\psi$ to indicate that they are isometric and $\vf\equiv \psi$ when they are Witt equivalent.
For $n\in\nat$ and $a_1,\dots,a_n\in\mg{F}$, we denote by $\lla a_1,\dots,a_n\rra$ the $n$-fold tensor product of binary quadratic forms 
$\la 1,-a_1\ra\otimes\dots\otimes\la 1,-a_n\ra$, which is called an \emph{$n$-fold Pfister form}.
Given a non-degenerate quadratic form $\vf$ over $F$ and $m\in\nat$, we denote the $m$-fold orthogonal sum $\vf\perp\ldots\perp\vf$ by $m\times \vf$.
We denote by $\W F$ the Witt ring of $F$ and by $\I F$ its fundamental ideal.
For $n\in\nat$, the $n$th power of the ideal $\I F$ in $\W F$ is denoted by $\I^n F$.
Note that $\I^0F=\W F$.
We denote by $\I_{\tors}F$ the torsion ideal of $\W F$.
The field $F$ is called \emph{nonreal} if $-1$ is a sum of squares in $F$, and \emph{real} otherwise.
If $F$ is nonreal, then $\W F=\I_{\tors}F$, by \cite[Corollary 6.8 and Proposition 31.4~$(6)$]{EKM08}.

Using divided power operations on the graded cohomology ring of $F(\sqrt{-1})$ with $\zz/2\zz$-coefficients, B.~Kahn \cite[Proposition~3.3]{Kah05} showed that, if $F$ is nonreal with $\lambda^2(F)<\infty$, then $\I^{2\lambda^2(F)+3}F=0$.
We will see that, under the same hypothesis, we have already $\I^{2\lambda^2(F)+2}F=0$, which Kahn could obtain only under the stronger hypothesis that $-1\in\sq{F}$. 
Since $\lambda'(F)\leq\lambda^2(F)$, 
the following main result of this article refines Kahn's statement even in the case where $-1\in\sq{F}$.
We give a formulation which also covers the case where $F$ is real.

\begin{thm*}[\Cref{T1}]\label{T:torsion-free}
Assume that $\lambda'(F)<\infty$.
Then $\I^{2\lambda'(F)+2}F(\sqrt{-1})=0$ and
$\I^{2\lambda'(F)+2}F$ is torsion-free. 
In particular, if $F$ is nonreal, then $\I^{2\lambda'(F)+2}F=0$.
\end{thm*}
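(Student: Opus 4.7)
Set $n = \lambda'(F)$ and $K = F(\sqrt{-1})$. My plan rests on the paper's main technical result (summarised in the abstract): over any field $L$ of characteristic different from $2$, an anisotropic quadratic Pfister form of trivial total signature has dimension at most $2\ind(\alpha)^2$ for some $\alpha \in \brd{L}$. I shall call this the \emph{key bound}.

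I would first settle the ``in particular'' clause. Assume $F$ is nonreal. Every Pfister form over $F$ then has trivial total signature vacuously, so by the key bound any anisotropic Pfister form over $F$ has dimension at most $2\cdot 2^{2n} = 2^{2n+1}$; hence every $(2n+2)$-fold Pfister form over $F$ is hyperbolic. Since $\I^k F/\I^{k+1}F$ is additively generated by classes of $k$-fold Pfister forms (by Milnor's conjecture, now a theorem of Orlov--Vishik--Voevodsky), this forces $\I^{2n+2}F \subseteq \I^{2n+3}F$, and iterating together with the Arason--Pfister Hauptsatz gives $\I^{2n+2}F = 0$.

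For $\I^{2n+2}F(\sqrt{-1})=0$ I would apply the nonreal-case argument to $K$; this requires the auxiliary bound $\lambda'(K)\leq n$, which I would approach via the corestriction $\mathrm{cor}\colon\brd{K}\to\brd{F}$. For $\alpha\in\brd{K}$ the class $\mathrm{cor}(\alpha)\in\brd{F}$ has index at most $2^n$, and since $\ker(\mathrm{res}\colon\brd{F}\to\brd{K})$ is generated by the quaternion classes $(-1,a)_F$ of index at most $2$, one expects to recover $\ind(\alpha)\leq 2^n$. Given $\I^{2n+2}K=0$, torsion-freeness follows by a Pfister-sequence descent: for $\phi\in\I^{2n+2}F$ torsion, $\phi_K=0$ forces $\phi=\lla-1\rra\psi=2\psi$ in $\W F$ for some $\psi$, with $\psi$ torsion since $\sign_P\phi=2\sign_P\psi$ vanishes at every ordering $P$. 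Iterating via the Kato-style cohomological exact sequence for $(F,K)$ ---which, combined with $\I^{2n+2}K=0$, yields $\phi\equiv\lla-1\rra\rho\pmod{\I^{2n+3}F}$ for some $\rho\in\I^{2n+1}F$--- and then recursing on the residue $\phi-\lla-1\rra\rho\in\I^{2n+3}F\cap\I_{\tors}F$ while applying Arason--Pfister to control the tail, the key bound (which makes torsion $k$-fold Pfisters hyperbolic for $k\geq 2n+2$) forces $\phi=0$.

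The main obstacle I anticipate is the bound $\lambda'(K)\leq n$: Brauer $2$-torsion classes over $K$ need not be restrictions from $F$, so the corestriction bookkeeping specific to the quadratic extension $F(\sqrt{-1})/F$ is the most delicate part of the argument. The torsion-freeness step, while involving several iterations, is essentially a Witt-ring computation driven by Pfister's exact sequence and the key bound.
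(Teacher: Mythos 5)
Your argument has two fatal problems. First, it is circular: the ``key bound'' you take as your starting point is exactly the statement in the abstract, and the paper explicitly notes that this statement is \emph{derived from} the theorem you are asked to prove (via Pfister's Local--Global Principle); it is a corollary of \Cref{T1}, not an available input. The actual engine of the paper's proof is the trace-form computation of \Cref{P:so2s-slot-trace}: if $a_0$ is a sum of two squares and $\bigotimes_{i=0}^n(a_i,b_i)_F$ is not a division algebra, then $\lla a_0,b_0,\dots,a_n,b_n\rra$ is hyperbolic. Your proposal never invokes this result or any substitute for it, so nothing in it uses the hypothesis $\lambda'(F)\leq n$ in a non-question-begging way.

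Second, your route to $\I^{2n+2}F(\sqrt{-1})=0$ requires $\lambda'(F(\sqrt{-1}))\leq\lambda'(F)$, and the corestriction sketch you give for this does not work. Knowing that $\mathrm{cor}(\alpha)$ has index at most $2^n$ gives no control on $\ind(\alpha)$ for $\alpha\in\brd{K}$: the composite $\mathrm{res}\circ\mathrm{cor}$ sends $\alpha$ to $\alpha+{}^{\sigma}\alpha$, which vanishes on every $\sigma$-invariant $2$-torsion class, and the description of $\ker(\mathrm{res})$ by classes $(-1,a)_F$ concerns classes over $F$ that die over $K$ --- the wrong direction entirely. I am not aware of any general bound of the form $\lambda'(F(\sqrt{-1}))\leq\lambda'(F)$, and the paper does not need one: it shows that every $(2n+2)$-fold Pfister form \emph{defined over $F$} becomes hyperbolic over any extension in which one of its slots becomes a sum of two squares (so $F$ satisfies Condition $(A_{2n+2})$, and taking $F'=F(\sqrt{-1})$ gives $\I^{2n+2}F=2\times\I^{2n+1}F$), and then cites \cite[Corollary 35.27]{EKM08}, whose transfer arguments for the quadratic extension $F(\sqrt{-1})/F$ yield both $\I^{2n+2}F(\sqrt{-1})=0$ and the torsion-freeness of $\I^{2n+2}F$ without ever bounding indices over $F(\sqrt{-1})$. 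Your concluding torsion-freeness iteration is too vague to evaluate, but it also rests on the unproven $\I^{2n+2}K=0$ and on the circular key bound, so the gaps above are not repairable within your outline.
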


Note that the statement formulated in the abstract can be derived from \Cref{T:torsion-free} via Pfister's Local-Global Principle \cite[Theorem 31.22]{EKM08}.

The result of \Cref{T:torsion-free} is optimal for the case where $F$ is nonreal, that is, we may have that $\I^{2\lambda'(F)+1}F\neq 0$.
This can be seen by the classical example, for arbitrary $n\in\nat$, of the iterated power series field in $2n+1$ variables
$$F=\cc(\!(t_1)\!)\ldots(\!(t_{2n+1})\!)\,,$$ 
for which one has $\lambda'(F)=\lambda^2(F)=n$ (see \cite[Section 4]{BH04}) and $\I^{2n+1}F\neq 0= \I^{2n+2}F$.

If $F$ is a real field, then clearly $\I^m F\neq 0$ for all $m\in\nat$.
Our second main result gives extra information in the case where $F$ is real.

For $n,r\in\nat$ we denote by $2^r\times \I^n F$ the ideal of $\W F$ consisting of $2^r$-multiples of elements of $\I^n F$,
which is contained in $\I^{r+n}F$.

\begin{thm*}[\Cref{T2}]\label{T:real}
If $0<\lambda'(F)<\infty$, then
 $\I^{2\lambda'(F)+2}F=8\times \I^{2\lambda'(F)-1}F$.
\end{thm*}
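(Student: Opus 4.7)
Write $n=\lambda'(F)$. The inclusion $8\times \I^{2n-1}F\subseteq \I^{2n+2}F$ is immediate, since $8=\lla -1,-1,-1\rra\in \I^3F$ and so multiplication by $8$ sends $\I^{2n-1}F$ into $\I^{2n+2}F$. When $F$ is nonreal, \Cref{T1} gives $\I^{2n+2}F=0$, and both sides vanish. From now on I assume that $F$ is real and focus on proving the reverse inclusion $\I^{2n+2}F\subseteq 8\times \I^{2n-1}F$.

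The plan is to iterate three times the following ``descent step'': any $\vf\in \I^kF$ that vanishes in $\W F(\sqrt{-1})$ can be written as $\vf=\lla -1\rra\eta$ with $\eta\in \I^{k-1}F$, and one can arrange that $\eta$ itself vanishes in $\W F(\sqrt{-1})$. The starting input is \Cref{T1} applied to $F(\sqrt{-1})$: since $F(\sqrt{-1})$ is nonreal with $\lambda'(F(\sqrt{-1}))\le n$, one has $\I^{2n+2}F(\sqrt{-1})=0$. Consequently, for $\vf\in \I^{2n+2}F$ the form $\vf_{F(\sqrt{-1})}$ is zero, and by Arason's identification $\ker(\W F\to \W F(\sqrt{-1}))=\lla -1\rra\cdot\W F$ one obtains $\vf=\lla -1\rra\eta_1$ for some $\eta_1\in \W F$. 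Applying the descent step once yields $\eta_1\in \I^{2n+1}F$ with $(\eta_1)_{F(\sqrt{-1})}=0$; a second and a third application then produce $\vf=\lla -1,-1\rra\eta_2=4\eta_2$ with $\eta_2\in \I^{2n}F$, and eventually $\vf=\lla -1,-1,-1\rra\eta_3=8\eta_3$ with $\eta_3\in \I^{2n-1}F$, which is exactly the required conclusion.

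Each descent step is established via the Arason exact sequence
\[H^{k-1}(F(\sqrt{-1}),\zz/2)\xrightarrow{\mathrm{cor}} H^{k-1}(F,\zz/2)\xrightarrow{\cdot\{-1\}} H^k(F,\zz/2)\xrightarrow{\mathrm{res}} H^k(F(\sqrt{-1}),\zz/2),\]
together with the vanishing of $H^j(F(\sqrt{-1}),\zz/2)$ for $j\ge 2n+2$ coming from \Cref{T1}. In the Witt ring, the corestriction is realised by the Scharlau transfer $s\colon \W F(\sqrt{-1})\to \W F$, whose image coincides with the annihilator of $\lla -1\rra$ in $\W F$. A starting preimage $\eta_0$ with $\vf=\lla -1\rra\eta_0$ is then corrected, degree by degree along the $\I F$-adic filtration, by elements of the form $s(\beta)$ so as to land in $\I^{k-1}F$.

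The main difficulty is to arrange simultaneously that the corrected $\eta$ itself lies in the kernel of restriction to $F(\sqrt{-1})$, as is required in order to apply the descent step again. The freedom in $\eta$ is precisely the annihilator of $\lla -1\rra$ in $\W F$, and for $\xi=s(\alpha)$ in this annihilator one has $\xi_{F(\sqrt{-1})}=\alpha+\sigma(\alpha)$, where $\sigma$ is the non-trivial automorphism of $F(\sqrt{-1})/F$; thus the problem reduces to realising $-\eta_{F(\sqrt{-1})}$ in the image of $1+\sigma$ on $\W F(\sqrt{-1})$. Solving this representation problem in the full Witt ring, rather than only in the associated graded, is the crux of the argument; the decisive input is once more \Cref{T1}, which bounds the Witt-theoretic complexity of $F(\sqrt{-1})$ enough to reduce the question to a finite $2$-torsion control issue. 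Once the three descent steps have been carried out, the equality $\I^{2n+2}F=8\times \I^{2n-1}F$ follows.
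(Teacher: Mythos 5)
Your opening reductions are fine: the inclusion $8\times \I^{2n-1}F\subseteq \I^{2n+2}F$ is clear, and the passage from $\I^{2n+2}F(\sqrt{-1})=0$ to $\I^{2n+2}F=2\times\I^{2n+1}F$ (equivalently, generation by classes $2\times\rho$ with $\rho$ a $(2n+1)$-fold Pfister form) is legitimate; this is exactly how the paper begins, via \cite[Theorem 34.22]{EKM08}. The gap is in your second and third ``descent steps'', which you do not carry out: you reduce them to realising $-\eta_{F(\sqrt{-1})}$ in the image of $1+\sigma$ and then assert that \Cref{T1} ``bounds the Witt-theoretic complexity of $F(\sqrt{-1})$ enough''. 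That assertion is the whole content of the theorem and is left unproved. Worse, it cannot be proved from the inputs you actually invoke. After your first paragraph, every tool you list (Arason's exact sequence, the Scharlau transfer, the relation $\mathrm{res}\circ s=1+\sigma$, and the vanishing $\I^{2n+2}F(\sqrt{-1})=0$) depends only on the single hypothesis $\I^{m}F(\sqrt{-1})=0$ with $m=2n+2$. If such a formal descent worked, it would show for every real field $K$ and every $m\geq 3$ that $\I^mK(\sqrt{-1})=0$ implies $\I^mK=8\times\I^{m-3}K$. This is false: take $K=\rr(\!(t_1)\!)(\!(t_2)\!)(\!(t_3)\!)$ and $m=4$. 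Then $K(\sqrt{-1})=\cc(\!(t_1)\!)(\!(t_2)\!)(\!(t_3)\!)$ has $u$-invariant $8$, so $\I^4K(\sqrt{-1})=0$; yet $\lla -1,t_1,t_2,t_3\rra$ does not lie in $8\times\W K$, since its signature is $16$ at exactly one of the eight orderings of $K$ and $0$ at the other seven, while these eight orderings form a fan, so the sum of the signatures of any element of $\W K$ over them is divisible by $8$ --- which $8\times\alpha'$ would violate. (This $K$ has $\lambda'(K)=2$, so it does not contradict the theorem, whose claim for $K$ concerns $\I^6K$; it contradicts your method.)

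The missing ingredient is arithmetic, not formal: each further halving requires a \emph{fresh} application of the index bound to an algebra carrying a factor $(-1,\cdot)_F$. To get $2\times\rho=4\times\psi$ for $\rho=\lla a_1,\dots,a_{2n+1}\rra$ the paper applies the hypothesis to $(-1,a_1)_F\otimes\bigotimes_{i=1}^n(a_{2i},a_{2i+1})_F$ and runs a trace-form computation (\Cref{L}); to get $4\times\psi=8\times\vartheta$ it applies the hypothesis to $A\otimes(-1,-1)_F$ in the proof of \Cref{T2}. Your outline of three successive divisions by $2$ matches the paper's architecture, but the substance of steps two and three --- the reason the intermediate forms are again split by $F(\sqrt{-1})$, respectively by the function field of $8\times\la 1\ra$ --- is precisely what you have replaced by an unproved claim.
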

The relation formulated in \Cref{T:real} is optimal.
Before illustrating this, we point out a relation to the \emph{reduced stability index of $F$}, which is denoted by $\st(F)$ and defined as follows:
$$\st(F)\,=\,\inf \{n\in\nat\mid \I^{n+1} F\subseteq 2\times \I^n F+ \I_\tors F\}\,\in\,\nat\cup\{\infty\}\,.$$
(See \cite[Prop.~35.23]{EKM08} for this characterisation.)
Note that $\st(F)=0$ if and only if $F$ is nonreal or uniquely ordered.

\begin{cor*}
If $0<\lambda'(F)<\infty$, then $\st(F)\leq 2\lambda'(F)-1$.
\end{cor*}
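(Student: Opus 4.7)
Write $n = \lambda'(F)$, so by hypothesis $n \geq 1$. In view of the characterisation $\st(F) = \inf\{m \in \nat \mid \I^{m+1}F \subseteq 2 \times \I^m F + \I_\tors F\}$ recalled just above, the bound $\st(F) \leq 2n-1$ is equivalent to the inclusion
$$\I^{2n}F \;\subseteq\; 2 \times \I^{2n-1}F + \I_\tors F\,.$$
My plan is to establish this inclusion by feeding an arbitrary $\xi \in \I^{2n}F$ through \Cref{T2}.

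The key observation is that $\lla -1,-1\rra = 4 \times \la 1\ra$ lies in $\I^2 F$, so $4\xi$ belongs to $\I^2 F \cdot \I^{2n}F \subseteq \I^{2n+2}F$. Now \Cref{T2} identifies $\I^{2n+2}F$ with $8 \times \I^{2n-1}F$, so there exists $\eta \in \I^{2n-1}F$ with $4\xi = 8\eta$ in $\W F$. Rearranging gives $4(\xi - 2\eta) = 0$, so $\xi - 2\eta$ is a $4$-torsion element of $\W F$. Since $n \geq 1$, we have $2\eta \in 2 \times \I^{2n-1}F \subseteq \I^{2n}F \subseteq \I F$ and also $\xi \in \I F$, so the torsion element $\xi - 2\eta$ lies in $\I_\tors F$. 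Writing $\xi = 2\eta + (\xi - 2\eta)$ then yields $\xi \in 2 \times \I^{2n-1}F + \I_\tors F$, completing the argument.

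The whole derivation is essentially a squeeze: \Cref{T2} exhibits every element of $\I^{2n+2}F$ as an $8$-multiple of something in $\I^{2n-1}F$, while every element of $\I^{2n}F$ becomes $4$-divisible inside $\I^{2n+2}F$. The resulting factor-of-$2$ mismatch is precisely what gets absorbed by $\I_\tors F$. Consequently there is no genuine obstacle beyond correctly combining the divisibility $4\xi \in \I^{2n+2}F$ with the identification provided by \Cref{T2}; the only care required is to ensure $n \geq 1$ so that $2\eta$ lies in $\I F$ and hence the torsion element lands in $\I_\tors F$ rather than merely in $\tors(\W F)$.
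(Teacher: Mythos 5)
Your argument is correct and is essentially identical to the paper's own proof: both multiply an arbitrary element of $\I^{2\lambda'(F)}F$ by $4$ to land in $\I^{2\lambda'(F)+2}F=8\times\I^{2\lambda'(F)-1}F$ via \Cref{T2}, and then absorb the resulting $4$-torsion difference into $\I_\tors F$. The only difference is notational, plus your (harmless) extra care in checking that the torsion element lies in $\I F$.
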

\begin{proof}
Assume that $0<\lambda'(F)<\infty$ and set $n=2\lambda'(F)-1$. Then $\I^{n+3}F=8\times \I^n F$, by \Cref{T:real}.
Hence,  for any $\alpha\in\I^{n+1}F$, we have $4\times \alpha = 8\times \alpha'$ for some $\alpha'\in \I^n F$, whereby $\alpha\equiv 2\times\alpha'\bmod\I_\tors F$.
\end{proof}

Since $\lambda'(F)\leq \lambda^2(F)$, this result refines the bound $\st(F)\leq 2\lambda^2(F)-1$, which was obtained in \cite[Corollary 2.3]{BG12}.

To see that both \Cref{T:real} and the Corollary are best possible, we consider, for an arbitrary integer $n\geq 2$, the  iterated power series field
$$F=\rr(\!(t_1)\!)\ldots(\!(t_{2n-1})\!)\,.$$ 
In this case we have $\lambda'(F)=\lambda^2(F)=n$ (see \cite[Section 4]{BH04}), and the class of the  $(2n+2)$-fold Pfister form $\lla -1,-1,-1, t_1,\dots,t_{2n-1}\rra$ lies in $8\times \I^{2n-1}F$, but not in $16\times \I^{2n-2}F$. As $\I^{2n+2} F$ is torsion-free, by \Cref{T:torsion-free}, this also implies that $\st(F)=2n-1$.

In the case where $\lambda'(F)= 1$, \Cref{T:torsion-free} and \Cref{T:real} correspond to \cite[Corollary 2.8 and Corollary 2.9]{EL73}.
For all cases with $\lambda'(F)>1$, the above mentioned results are new.

\section{Trace forms and tensor product}

To obtain these results we use elementary computations with trace forms of central simple algebras.
The effectivity of this method for the study of powers of $\I F$ seems to have passed unnoticed so far.

Let $F$ be a central simple $F$-algebra $A$. We denote by $\trd_A:A\to F$ the reduced trace map.
From this $F$-linear form on $A$, one obtains a nondegenerate quadratic form 
$$\T_A:A\to F, x\mapsto \trd_A(x^2)\,,$$
which is called the \emph{trace form of $A$}.
By \cite[Lemma 2.1]{Lew94},  for any two central simple $F$-algebras $A$ and $B$, we have 
$$\T_{A\otimes B}=\T_A\otimes \T_B\,.$$

For a quadratic form $\vf$ over $F$ and $r\in\nat$, we write $2^r\vf$ for the quadratic form obtained by scaling $\vf$ with $2^r$, while we denote by $2^r\times \vf$ the $2^r$-fold orthogonal sum $\vf\perp\ldots\perp\vf$.

Given an $F$-quaternion algebra $Q$ we denote its norm form by $\N_Q$.
If we have $Q=(a,b)_F$ with $a,b\in\mg{F}$, then we obtain that $\N_Q=\lla a,b\rra=\la 1,-a,-b,ab\ra$ and $\T_Q=\la 2,2a,2b,-2ab\ra$, and therefore $\la -1\ra \perp 2\T_Q = \la 1\ra\perp -\N_Q$.

Hence, for any $F$-quaternion algebra $Q$, we have  
$$\T_Q\equiv  \la 1,1\ra \perp -2 \N_Q\,.$$
In particular, we have
$\T_{\matr{2}{F}} \equiv \la 1,1\ra$. Hence, for an arbitrary central simple $F$-algebra $A$, we obtain that
$$\T_{\matr{2}{A}}=\T_{\matr{2}{F}}\otimes \T_A\equiv 2\times \T_A\,.$$

\section{Computing Pfister forms via trace forms}

\begin{prop}\label{P:so2s-slot-trace}
Let $n\in\nat$ and $a_0,b_0,\dots,a_n,b_n\in\mg{F}$ be such that 
$a_0$ is a sum of two squares in $F$ and the $F$-algebra $\bigotimes_{i=0}^n (a_i,b_i)_F$ is not a division algebra.
Then the $(2n+2)$-fold Pfister form $\lla a_0,b_0,\dots,a_n,b_n\rra$ over $F$ is hyperbolic.
\end{prop}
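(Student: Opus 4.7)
The plan is to combine the paper's trace form identity with a structural decomposition of $A = \bigotimes_{i=0}^n (a_i, b_i)_F$. Write $\pi_i = \lla a_i, b_i\rra = \N_{Q_i}$, $A_1 = \bigotimes_{i=1}^n (a_i, b_i)_F$, and $\phi = \pi_0\pi_1\cdots\pi_n = \lla a_0, b_0, \dots, a_n, b_n\rra$. First, the sum-of-squares hypothesis forces $\lla -1, a_0\rra$ to be isotropic, hence hyperbolic as a Pfister form, so
$$\lla -1\rra \pi_0 = \lla -1, a_0, b_0\rra = 0 \quad\text{in } \W F.$$

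Since $A$ has degree $2^{n+1}$ but is not division, $A \cong \matr{2}{B}$ for some central simple $F$-algebra $B$ of degree $2^n$. Using the paper's formulas, $\T_A \equiv \lla -1\rra \T_B$ via the matrix decomposition, while directly $\T_A = \T_{Q_0}\T_{A_1} \equiv (\lla -1\rra - 2\pi_0)\T_{A_1}$. Equating yields
$$\lla -1\rra(\T_{A_1} - \T_B) \equiv 2\pi_0 \T_{A_1} \quad\text{in } \W F.$$
Now expand $\pi_0 \T_{A_1} = \pi_0 \prod_{i=1}^n(\lla -1\rra - 2\pi_i)$; every term retaining a factor of $\lla -1\rra$ vanishes when multiplied by $\pi_0$, leaving only $\pi_0 \prod_{i=1}^n (-2\pi_i) \equiv (-2)^n \phi$. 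Thus $\phi$ and $\lla -1\rra(\T_{A_1} - \T_B)$ coincide up to a unit scalar in $\W F$, and it remains to show $\lla -1\rra(\T_{A_1} - \T_B) = 0$.

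I would handle the final vanishing by case analysis on the index of $A_1$. If $A_1$ is not division, write $A_1 \cong \matr{2}{B_1}$ with $\deg B_1 = 2^{n-1}$; then $A \cong \matr{2}{Q_0 \otimes B_1}$, so $B \cong Q_0 \otimes B_1$, giving $\T_{A_1} - \T_B = 2\pi_0 \T_{B_1}$, and the product with $\lla -1\rra$ is zero thanks to $\lla -1\rra \pi_0 = 0$. If $A_1$ is division while $Q_0$ is split, then $\pi_0 = 0$ and $\phi$ vanishes trivially. If $A_1$ and $Q_0$ are both division, the classical embedding theorem for tensor products of a quaternion division algebra with a division algebra (the generalisation of Albert's result due to Draxl) forces $Q_0 \hookrightarrow A_1$; by the centralizer theorem $A_1 \cong Q_0 \otimes C$ with $\deg C = 2^{n-1}$, so $A \cong \matr{4}{C}$ and $B \cong \matr{2}{C}$, giving $\T_{A_1} - \T_B = -2\pi_0 \T_C$ and once more $\lla -1\rra(\T_{A_1} - \T_B) = 0$.

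The main obstacle is the invocation of the classical embedding theorem in the third subcase; the other steps are routine bookkeeping with the identity $\T_Q \equiv \lla -1\rra - 2 \N_Q$ and the key relation $\lla -1\rra \pi_0 = 0$ extracted from the sum-of-squares hypothesis.
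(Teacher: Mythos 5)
Your reduction is correct up to and including the identity $\lla-1\rra(\T_{A_1}-\T_B)\equiv 2\pi_0\T_{A_1}\equiv (-1)^n2^{n+1}\phi$ in $\W F$; this is essentially the paper's own computation (your $A_1$ and $B$ are its $B$ and $B'$). But the last step has a genuine gap. Note first that your identity shows that the vanishing of $\lla-1\rra(\T_{A_1}-\T_B)$ in $\W F$ is \emph{equivalent} to the hyperbolicity of $\phi$, so no proof of it can be mere bookkeeping. Concretely, your third case invokes a statement that is not a theorem: Albert's theorem and its generalisation to a quaternion division algebra tensored with a division algebra assert only that if $Q_0$ and $A_1$ are division while $Q_0\otimes A_1$ is not, then $Q_0$ and $A_1$ contain a \emph{common quadratic subfield}; they do not give an embedding $Q_0\hookrightarrow A_1$. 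Already for $n=1$ your claim would force $Q_0\cong A_1$ whenever two quaternion division algebras have non-division tensor product, which is false: $(-1,-1)_{\qq}$ and $(-1,-7)_{\qq}$ both contain $\qq(\sqrt{-1})$, so their tensor product is not division, yet they are non-isomorphic (they have different ramification). Without $A_1\cong Q_0\otimes C$ the decomposition $\T_{A_1}-\T_B=-2\pi_0\T_C$ is unavailable, and the generic case of your argument collapses.

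The repair is short and is exactly how the paper concludes: you do not need the Witt class $\lla-1\rra(\T_{A_1}-\T_B)$ to vanish a priori, only that $\phi$ be \emph{isotropic}. Both sides of your Witt equivalence have dimension $4^{n+1}$, so it is in fact an isometry $(2\times\T_{A_1})\perp-(2\times\T_B)=c\,\phi$ for some $c\in\mg{F}$. Since $\T_{A_1}(1)=\deg A_1=2^n=\deg B=\T_B(1)$, the forms $2\times\T_{A_1}$ and $2\times\T_B$ represent a common nonzero value, so their orthogonal difference is isotropic; hence $\phi$ is an isotropic Pfister form and therefore hyperbolic. With that replacement your argument coincides with the paper's proof.
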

\begin{proof}
Set $\rho=\lla a_0,b_0,\dots,a_n,b_n\rra$.
For $0\leq i\leq n$, we abbreviate $Q_i=(a_i,b_i)_F$, $\T_i=\T_{Q_i}$, $\N_i=\N_{Q_i}$ and we note that $\N_i=\lla a_i,b_i\rra$.
Hence $\rho=\bigotimes_{i=0}^n\N_i$.

Recall that $\T_i\equiv \la 1,1\ra\perp -2\N_i$ for $0\leq i\leq n$.
The hypothesis that $a_0$ is a sum of $2$ squares in $F$ implies that $2\times \N_0$ is hyperbolic and $-\N_0=\N_0$.
Therefore we obtain that  $\T_0\equiv \la 1,1\ra\perp 2\N_0$ and $\N_0\otimes \T_i =\N_0\otimes 2\N_i$ for $1\leq i\leq n$.

Let $B=\bigotimes_{i=1}^n Q_i$ and $A=Q_0\otimes B$. Since $\T_B=\bigotimes_{i=1}^n \T_i$, we
 obtain that   $$\N_0\otimes \T_B=  \N_0 \otimes\bigotimes_{i=1}^n (2\N_i) = (2^{n}) \rho\,.$$

Since $A$ has degree $2^{n+1}$ and is not a division algebra, we have $A=B'\otimes\matr{2}{F}$ for some central simple $F$-algebra $B'$ of degree $2^{n}$.
This yields that $$\T_0\otimes \T_B = \T_A\equiv 2\times \T_{B'}\,.$$
Since  $\T_0= \la 1,1\ra\perp 2\N_0$, we obtain that 
$$2\times \T_{B'}\equiv \T_0\otimes \T_B \equiv 2\times\T_B\perp (2^{n+1})\rho\,.$$
Therefore
$$(2^{n+1})\rho \equiv 2\times (\T_{B'}\perp-\T_B)\,.$$
As both of these forms have dimension $4^{n+1}$, we obtain that 
$$(2^{n+1})\rho = 2\times (\T_{B'}\perp-\T_B)\,.$$
As each of the forms $2\times \T_{B'}$ and $2\times \T_B$ represents $1$, we find that $\rho$ is isotropic.
Since $\rho$ is a Pfister form, we conclude that $\rho$ is hyperbolic.
\end{proof}

\begin{thm}\label{T1}
Let $n\in\nat$ be such that $\ind(\alpha)\leq 2^{n}$ for every $\alpha\in\brd{F}$.
Then $\I^{2n+2}F(\sqrt{-1})=0$ and $\I^{2n+2}F$ is torsion-free. In particular, if $F$ is nonreal, then $\I^{2n+2}F=0$.
\end{thm}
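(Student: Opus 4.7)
The plan is to prove the two main assertions separately---vanishing of $\I^{2n+2} K$ for $K = F(\sqrt{-1})$, and torsion-freeness of $\I^{2n+2} F$---by applying Proposition \ref{P:so2s-slot-trace} strategically. The ``in particular'' clause follows immediately from the torsion-freeness, since $\W F = \I_\tors F$ when $F$ is nonreal.

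For the vanishing over $K$, it suffices to show every $(2n+2)$-fold Pfister form $\lla c_0, d_0, \dots, c_n, d_n\rra$ over $K$ is hyperbolic, since $\I^{2n+2} K$ is additively generated by such forms. I would apply Proposition \ref{P:so2s-slot-trace} over $K$: the sum-of-two-squares hypothesis on $c_0$ is automatic, because $\sqrt{-1} \in K$ yields $c_0 = \left(\tfrac{c_0+1}{2}\right)^2 + \left(\tfrac{\sqrt{-1}(c_0-1)}{2}\right)^2$. The remaining hypothesis, that the tensor product $\bigotimes_{i=0}^{n}(c_i,d_i)_K$ of degree $2^{n+1}$ is not a division algebra, amounts to $\lambda'(K) \leq n$. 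To deduce this bound from $\lambda'(F) \leq n$, I would use corestriction: for any $\alpha \in \brd{K}$, the class $\mathrm{cor}(\alpha) \in \brd{F}$ has index at most $2^n$ by hypothesis, and the identity $\mathrm{res} \circ \mathrm{cor}(\alpha) = \alpha + \sigma(\alpha)$ (with $\sigma$ generating $\mathrm{Gal}(K/F)$) controls $\ind(\alpha + \sigma\alpha)$; upgrading this to a bound on $\ind(\alpha)$ itself should require a further trace-form computation exploiting the Galois symmetry.

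For the torsion-freeness of $\I^{2n+2} F$, I would invoke Arason's exact sequence
\[
\I^{2n+1} F \xrightarrow{\,\cdot \lla -1\rra\,} \I^{2n+2} F \xrightarrow{\,\mathrm{res}\,} \I^{2n+2} K \,.
\]
Since $\I^{2n+2} K = 0$ from the previous step, every $x \in \I^{2n+2} F$ equals $2y$ for some $y \in \I^{2n+1} F$, and if $x$ is torsion so is $y$. I would then combine this description with Pfister's local-global principle (signatures vanish at all real closures for torsion elements) and the Arason--Pfister Hauptsatz to force $x = 0$; the argument effectively exploits the fact that torsion in $\W F$ is 2-primary and that 2-divisibility within the filtration $\{\I^m F\}_m$ is sharply constrained by Arason's description of the kernel of restriction.

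The main obstacle will be the index transfer from $F$ to $K$ in the first step: although corestriction gives a clean bound on $\ind(\alpha + \sigma\alpha)$, upgrading to a bound on $\ind(\alpha)$ itself requires genuine additional input, most plausibly a tailored trace-form identity in the spirit of Proposition \ref{P:so2s-slot-trace}, exploiting the relationship between a representing algebra $A$ and its Galois twist $A^\sigma$.
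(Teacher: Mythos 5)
There is a genuine gap, and it sits exactly where you locate ``the main obstacle'': your route to $\I^{2n+2}K=0$ for $K=F(\sqrt{-1})$ requires the index hypothesis to hold for all $\alpha\in\brd{K}$, not just for $\alpha\in\brd{F}$, and neither the identity $\mathrm{res}\circ\mathrm{cor}(\alpha)=\alpha+\sigma(\alpha)$ nor the ``Galois-twisted trace form computation'' you gesture at is known to supply this (the identity bounds $\ind(\alpha+\sigma\alpha)$, which gives no control on $\ind(\alpha)$). The paper's proof never needs such a transfer. It applies \Cref{P:so2s-slot-trace} only to Pfister forms $\lla a_1,\dots,a_{2n+2}\rra$ with slots in $\mg{F}$, but over an \emph{arbitrary} extension field $F'$: the algebra $\bigotimes_{i=0}^{n}(a_{2i+1},a_{2i+2})_{F'}$ is extended from an $F$-algebra of degree $2^{n+1}$ and exponent at most $2$, which by hypothesis has index at most $2^{n}$ over $F$ and hence is not a division algebra over $F'$ either, since index cannot increase under scalar extension. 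Taking $F'=F$ shows that $F$ satisfies condition $(A_{2n+2})$, and taking $F'=F(\sqrt{-1})$ (where every element is a sum of two squares, as you correctly note) shows that every $(2n+2)$-fold Pfister form over $F$ dies over $K$, i.e.\ $\I^{2n+2}F=2\times\I^{2n+1}F$. Both conclusions of the theorem --- including $\I^{2n+2}K=0$, which does concern forms with slots in $K$ --- then follow simultaneously from the Elman--Lam machinery packaged in \cite[Corollary 35.27]{EKM08}; that citation is precisely the device that transfers information from forms defined over $F$ to the whole of $\I^{2n+2}K$, so no bound on $\lambda'(K)$ is ever needed.

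Your second half has a similar problem: deducing torsion-freeness from ``$x=2y$ with $y$ torsion, plus Pfister's local-global principle and the Hauptsatz'' is not yet an argument --- there is no dimension bound on $x$ to feed into the Hauptsatz, and the graded exactness $\ker(\I^{m}F\to\I^{m}K)=2\times\I^{m-1}F$ that you invoke is itself a serious input rather than a formality. Again this is exactly what \cite[Corollary 35.27]{EKM08} delivers once $(A_{2n+2})$ and $\I^{2n+2}F=2\times\I^{2n+1}F$ are in hand. The correct fix is therefore not to push the index bound up to $K$, but to reorganize the argument so that the quaternion algebras always have $F$-rational slots and only the sum-of-two-squares condition is tested over the extension field.
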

\begin{proof}
Set $m=2n+2$. 
Consider arbitrary elements $a_1,\dots,a_m\in \mg F$ and an arbitrary field extension $F'/F$.
The hypothesis implies that the $F'$-algebra $\bigotimes_{i=0}^n (a_{2i+1},a_{2i+2})_{F'}$ is not a division algebra.
If $a_1$ is a sum of two squares in $F'$, then we obtain by \Cref{P:so2s-slot-trace} that $\lla a_1,\dots,a_m\rra$ is hyperbolic over $F'$.
Hence, every $m$-fold Pfister form over $F$ becomes hyperbolic over every field extension where one of its slots becomes a sum of two squares. 
For $F'=F$, this shows that $F$ satisfies Condition $(A_m)$ in the terms of \cite[Section 35]{EKM08}.
Furthermore, since in $F'=F(\sqrt{-1})$ every element is a sum of two squares, we obtain that every $m$-fold Pfister form defined over $F$ becomes hyperbolic over $F(\sqrt{-1})$, whereby $\I^m F=2\times \I^{m-1}F$.
From these two facts together it follows by \cite[Corollary 35.27]{EKM08} that $\I^{m}F(\sqrt{-1})=0$ and $\I^mF$ is torsion-free.
\end{proof}

\begin{prop}\label{L}
Let $n\in\nat\setminus\{0\}$ and $a_1,\dots,a_{2n+1}\in\mg{F}$ be such  that the $F$-algebra $(-1,a_1)_F\otimes \bigotimes_{i=1}^n (a_{2i},a_{2i+1})_F$ is not a division algebra.
 Then there exists a $2n$-fold Pfister form $\psi$ over $F$ such that
$$2\times \lla a_1,\dots,a_{2n+1}\rra = 4\times \psi\,.$$
\end{prop}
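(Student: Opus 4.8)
The plan is to imitate the proof of \Cref{P:so2s-slot-trace}, but to keep one factor ``uncontracted'' so that, instead of getting a Pfister form which is outright hyperbolic, we get a relation between $2\times\rho$ and $4\times\psi$ for a suitable $\psi$. Write $Q_0=(-1,a_1)_F$, and for $1\leq i\leq n$ write $Q_i=(a_{2i},a_{2i+1})_F$, so $\N_i=\N_{Q_i}$ satisfies $\N_i=\lla a_{2i},a_{2i+1}\rra$ and $\N_0=\lla -1,a_1\rra$; set $\rho=\lla a_1,\dots,a_{2n+1}\rra$. The crucial feature of the first slot being $-1$ is that $-1$ is a sum of two squares (indeed a square) in $F$, so $2\times\N_0$ is hyperbolic and $-\N_0=\N_0$; hence $\T_0\equiv\la 1,1\ra\perp 2\N_0$ and $\N_0\otimes\T_i=\N_0\otimes 2\N_i$ for $1\leq i\leq n$, exactly as in the Proposition. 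Also note $\N_0=\lla -1,a_1\rra=\la 1,1\ra\otimes\la 1,-a_1\ra$, so $\N_0\otimes 2\N_i=2\times(\la 1,-a_1\ra\otimes 2\N_i)$; more to the point, $\N_0\otimes\rho=2\times(\la 1,-a_1\ra\otimes\lla a_2,\dots,a_{2n+1}\rra)$, and $(2^n)\rho$ differs from $\rho$ only by a square scalar when convenient. I would keep track of the $2^r$-scalings carefully, since they are cosmetic in the Witt ring but matter for the isometry step.

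The key steps are as follows. First, set $B=\bigotimes_{i=1}^n Q_i$ and $A=Q_0\otimes B$, a central simple $F$-algebra of degree $2^{n+1}$ which by hypothesis is not a division algebra, so $A=B'\otimes\matr{2}{F}$ with $B'$ central simple of degree $2^n$; hence $\T_A\equiv 2\times\T_{B'}$. Second, using $\T_B=\bigotimes_{i=1}^n\T_i$ and $\T_0=\la 1,1\ra\perp 2\N_0$, compute $\T_A=\T_0\otimes\T_B\equiv 2\times\T_B\perp 2\N_0\otimes\T_B$, and evaluate $\N_0\otimes\T_B=\N_0\otimes\bigotimes_{i=1}^n(2\N_i)=(2^n)\,\N_0\otimes\bigotimes_{i=1}^n\N_i$, which (absorbing the square scalar $2^n$ up to isometry of Pfister forms, or rather keeping it explicit) is $(2^n)$ times the form $\N_0\otimes\bigotimes_{i=1}^n\N_i$. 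Now $\N_0\otimes\bigotimes_{i=1}^n\N_i=\lla -1,a_1\rra\otimes\lla a_2,a_3\rra\otimes\cdots\otimes\lla a_{2n},a_{2n+1}\rra$, and $\lla -1,a_1\rra=\la 1,1\ra\otimes\la 1,-a_1\ra$, so this equals $2\times\bigl(\la 1,-a_1\ra\otimes\lla a_2,\dots,a_{2n+1}\rra\bigr)$, a $2\times$ of a scalar multiple of a $2n$-fold Pfister form. Third, combine: from $2\times\T_{B'}\equiv 2\times\T_B\perp (2^{n+1})\,\bigl(\la 1,-a_1\ra\otimes\lla a_2,\dots,a_{2n+1}\rra\bigr)$ one gets $(2^{n+1})\,\la 1,-a_1\ra\otimes\lla a_2,\dots,a_{2n+1}\rra\equiv 2\times(\T_{B'}\perp -\T_B)$, and a dimension count ($4^{n+1}$ on the left, $4^{n+1}$ on the right) promotes this Witt equivalence to an isometry. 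Fourth, relate $\la 1,-a_1\ra\otimes\lla a_2,\dots,a_{2n+1}\rra$ back to $\rho$: since $\rho=\lla a_1,\dots,a_{2n+1}\rra=\la 1,-a_1\ra\otimes\lla a_2,\dots,a_{2n+1}\rra$, the left side is just a scalar multiple of $\rho$, i.e. $2\times\rho\equiv 2\times\bigl(\la 1,-a_1\ra\otimes\lla a_2,\dots,a_{2n+1}\rra\bigr)$ after removing the square scalars (here one uses that $\la 2\ra\rho\equiv\rho$ is false in general, so one must instead carry $\la 2\ra$ through — but $2\times\la 2\ra\rho=\la 2,2\ra\otimes\rho$ and $\la 2,2\ra\otimes\rho$ versus $\la 1,1\ra\otimes\rho$: their difference is $\la 2,2,-1,-1\ra\otimes\rho$, which lies in $\I^{2n+3}F$ — better to note $2\times\la c\ra\psi_0=2\times\psi_0$ only up to torsion; the clean route is to keep the scalar inside and define $\psi$ to absorb it). Finally, the form $\T_{B'}\perp -\T_B$ is Witt-equivalent to a $4\times$ of something: since $\T_{B'}\equiv 2\times\T_{B''}$ or more directly since $2\times\T_{B'}\equiv 2\times\T_B\perp(\text{stuff})$, rearrange to see $4\times\psi=2\times\rho$ with $\psi$ the unique $2n$-fold Pfister form such that $2\times\psi$ is Witt-equivalent to $\la 1,-a_1\ra\otimes\lla a_2,\dots,a_{2n+1}\rra$'s relevant part; concretely, I expect $\psi=\lla 2,\ldots\rra$ or a scalar-adjusted version, determined by chasing the scalars through the isometry above.

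The main obstacle will be step four: extracting a genuine $2n$-fold \emph{Pfister} form $\psi$ (not merely a general $2n$-dimensional-in-$\I^{2n}$ form) from the relation, together with keeping the factors of $2$ (the scaling $2^r\psi$ versus the orthogonal sum $2^r\times\psi$) straight. In \Cref{P:so2s-slot-trace} this issue does not arise because the conclusion is simply hyperbolicity; here we need a named Pfister form on the right-hand side, so the bookkeeping of scalars — in particular reconciling $(2^{n+1})\rho$ (a scaling) with $4\times(\text{something})$ (an orthogonal sum) — is where the real work lies. I anticipate that the identity $\la 1,1\ra\otimes\rho=2\times\rho$ and $\la a\ra\otimes(\la 1,1\ra\otimes\rho')=\la 1,1\ra\otimes(\la a\ra\rho')$ will let one convert the square scalar $2^{n+1}$ into a harmless $\la 2\ra$ on the Pfister factor (since $\lla 2\rra$ absorbed into the Pfister symbol just changes one slot), so that in the end one may take $\psi=\lla 2,-a_1,a_2,a_3,\dots\rra$ suitably arranged, or observe that the scalar can be dropped because $2\times\la c\ra\psi_0\equiv 2\times\psi_0$ modulo $\I_{\tors}F$ is not needed — rather $2\times\rho$ and $2\times\la 2\ra\rho$ differ by $\la 2\rra\otimes\la 1,1\ra\otimes\rho$ which is again of the form $4\times(\text{Pfister})$, letting one fold everything together. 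Modulo this scalar-tracking, the proof is a direct transcription of the trace-form computation already carried out for \Cref{P:so2s-slot-trace}, replacing ``$a_0$ a sum of two squares'' by the concrete choice $a_0=-1$ and stopping the contraction one step earlier.
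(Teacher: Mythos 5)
Your argument breaks down at its very first step: you assert that ``$-1$ is a sum of two squares (indeed a square) in $F$,'' and from this you derive that $2\times\N_0$ is hyperbolic and $-\N_0=\N_0$ for $\N_0=\lla -1,a_1\rra$. None of this is available: $F$ is an arbitrary field of characteristic different from $2$ (it may be real, e.g.\ $F=\rr$ or $F=\qq$), and the whole point of \Cref{L}, as opposed to \Cref{P:so2s-slot-trace}, is that the first slot $-1$ need \emph{not} be a sum of two squares, so the hypothesis of \Cref{P:so2s-slot-trace} is not met and $2\times \lla a_1,\dots,a_{2n+1}\rra$ is in general \emph{not} hyperbolic (take $F=\rr$ and all $a_i=-1$: the hypothesis on the algebra holds since $\hh\otimes\hh\cong\matr{4}{\rr}$, yet $2\times\rho=2^{2n+2}\times\la 1\ra$ is anisotropic). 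What holds unconditionally is only $\N_0=2\times\la 1,-a_1\ra$ and $\T_0\equiv 2\times\la a_1\ra$; the identities $\N_0\otimes\T_i=\N_0\otimes 2\N_i$ and $\T_0\equiv\la 1,1\ra\perp 2\N_0$ that you import from \Cref{P:so2s-slot-trace} are valid exactly when $-1$ is a sum of two squares, and in that case the correct conclusion of the computation is that $2\times\rho$ is hyperbolic --- a special case, not the general statement.

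The second, related gap is the one you yourself flag and leave unresolved: you never actually produce the $2n$-fold Pfister form $\psi$, and no amount of bookkeeping of the scalars $2^r$ in the trace-form identity will yield one. The missing idea is a function-field descent. One first runs the trace-form computation under the extra assumption that $-1$ is a sum of two squares, obtaining that $2\times\rho$ is hyperbolic in that case; one then applies this over $F'=F(4\times\la 1\ra)$, the function field of the Pfister form $\lla -1,-1\rra$, where $-1$ does become a sum of two squares, so that $2\times\rho$ is hyperbolic over $F'$; the subform theorem \cite[Theorem 22.5]{EKM08} then forces $4\times\la 1\ra$ to be a subform of $2\times\rho$ over $F$; and \cite[Proposition 24.1]{EKM08} upgrades this to a factorization $2\times\rho=4\times\psi$ with $\psi$ a $2n$-fold Pfister form. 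Your proposed direct extraction (``$\psi=\lla 2,\dots\rra$ suitably arranged'') is unsupported: the existence of such a $\psi$ is precisely what the function-field and subform machinery is needed to establish.
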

\begin{proof}
Set $a_0=-1$.
For $0\leq i\leq n$, set $Q_i=(a_{2i},a_{2i+1})_F$, $\T_i=\T_{Q_i}$ and $\N_i=\N_{Q_i}$.
Set further $A=\bigotimes_{i=0}^n Q_i$.
Then $\T_A=\bigotimes_{i=0}^n \T_i$.
Furthermore, $\N_0=2\times \la 1,-a_1\ra$ and $\T_0\equiv 2\times \la a_1\ra$.
By the assumption, we further have that $A=B\otimes\matr{2}{F}$ for a central simple $F$-algebra $B$ of degree $2^n$.
It follows that
$$\T_A\equiv 2\times \T_B\,.$$

Set $\pi=\lla -1,a_2,\dots,a_{2n+1}\rra$, whereby $$2\times \lla a_1,\dots,a_{2n+1}\rra=\lla a_1\rra\otimes\pi\,.$$
If $-1$ is a sum of two squares in $F$, then $2\times \T_i= 2\times \N_i$ for $1\leq i\leq n$, and thus
$$2\times \T_B\equiv\T_A=\bigotimes_{i=0}^n \T_i\equiv 2\times \la a_1\ra  \otimes  \bigotimes_{i=1}^n \N_i \equiv  a_1 \pi\,,$$
whereby $a_1 \pi=2\times \T_B$ in view of the equality of the dimensions, and as $2\times \T_B$ represents $1$, it follows that $2\times \lla a_1,\dots,a_{2n+1}\rra$ is hyperbolic.

Let now $F'$ denote the function field of the quadratic form $4\times \la 1\ra$ over $F$.
Since $-1$ is a sum of two squares in $F'$, the argument of the preceding paragraph yields that $2\times \lla a_1,\dots,a_{2n+1}\rra$ becomes hyperbolic over $F'$. 
Since $4\times \la 1\ra$ is a Pfister form and $n\geq 1$, it follows by \cite[Theorem 22.5]{EKM08} 
that $4\times \la 1\ra$ is a subform of $2\times \lla a_1,\dots,a_{2n+1}\rra$ over $F$. 
By \cite[Proposition 24.1]{EKM08}, we conclude that 
$2\times \lla a_1,\dots,a_{2n+1}\rra  = 4\times \psi$ for a $2n$-fold Pfister form $\psi$ over $F$.
\end{proof}

\begin{thm}\label{T2}
Let $n\in\nat\setminus\{0\}$ be such that $\ind(\alpha)\leq 2^{n}$ for every $\alpha\in\brd{F}$.
Then $$\I^{2n+2}F=8\times \I^{2n-1}F\,.$$
\end{thm}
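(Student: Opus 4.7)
The inclusion $8\times \I^{2n-1}F \subseteq \I^{2n+2}F$ is immediate from $\lla -1,-1,-1\rra \in \I^3 F$, so the real task is the reverse inclusion. I would reduce to showing that every $(2n+2)$-fold Pfister form $\rho$ over $F$ lies in $8\times \I^{2n-1}F$, since such forms generate $\I^{2n+2}F$ as an abelian group.

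Combining \Cref{T1} with Proposition~\ref{L} first gives $\rho = 4\times \psi$ in $\W F$ for some $2n$-fold Pfister $\psi$: since $\rho_{F(\sqrt{-1})} = 0$ by \Cref{T1}, the Arason-type argument behind the identity $\I^m F = 2\times \I^{m-1}F$ used in the proof of T1 writes $\rho = \lla -1\rra \otimes \rho'$ for some $(2n+1)$-fold Pfister $\rho'$; then Proposition~\ref{L} applied to $\rho'$ (its hypothesis holds because the associated $(n+1)$-fold quaternion tensor product has degree $2^{n+1}$ and, by assumption, index at most $2^n$) gives $2\times \rho' = 4\times \psi$, so $\rho = 4\times \psi$ in $\W F$.

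To promote this factor of $4$ to a factor of $8$ against the smaller ideal $\I^{2n-1}F$, I would argue that $\rho$ is hyperbolic over the function field $K := F(\lla -1,-1,-1\rra)$ of the $3$-fold Pfister form. Granted this, the subform theorem \cite[Thm.~22.5]{EKM08} places $\lla -1,-1,-1\rra$ as a Pfister subform of $\rho$, and \cite[Prop.~24.1]{EKM08} then yields a decomposition $\rho = \lla -1,-1,-1\rra \otimes \gamma = 8\times \gamma$ in which, by dimension, $\gamma$ is a $(2n-1)$-fold Pfister, so $\gamma \in \I^{2n-1}F$ and $\rho \in 8\times \I^{2n-1}F$. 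The vanishing $\rho_K = 0$ would in turn come from \Cref{T1} applied over $K$: since $-1$ is a sum of seven squares in $K$, the field is nonreal, whence $\W K = \I_\tors K$; if moreover $\lambda'(K)\leq \lambda'(F)\leq n$, then T1 tells us $\I^{2n+2}K$ is torsion-free, forcing $\I^{2n+2}K = 0$, and in particular $\rho_K = 0$.

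The main obstacle I anticipate is establishing $\lambda'(K)\leq n$ for $K$ the function field of a Pfister quadric: scalar extension never increases the index of a fixed Brauer class, but new classes over $K$ must also be handled, and this typically requires non-trivial index-reduction input for Pfister function fields. Should that route prove problematic, a natural alternative would be to run a direct trace-form computation in the spirit of Proposition~\ref{L}, using an algebra of degree $2^{n+2}$ obtained by inserting an extra $(-1,-1)_F$ tensor factor and arguing that $8\times \la 1\ra$ appears as a subform of the relevant $(2n+2)$-fold Pfister; that would bypass $K$ altogether and is probably closer in spirit to the computational methods developed in the paper.
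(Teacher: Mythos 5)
Your first half is sound and matches the paper: reducing to $(2n+2)$-fold Pfister forms $\rho$, observing that $\rho_{F(\sqrt{-1})}$ is hyperbolic, writing $\rho=\lla -1\rra\otimes\rho'=2\times\rho'$, and invoking \Cref{L} (whose hypothesis is indeed automatic here) to get $\rho=4\times\psi$ for a $2n$-fold Pfister form $\psi$. The gap is exactly the one you flag yourself: the bound $\lambda'(K)\leq n$ for $K=F(\lla -1,-1,-1\rra)$ is not available. Restriction does not increase the index of classes extended from $F$, but $\brd{K}$ contains many classes that are not extended from $F$ (for instance classes ramified along divisors of the quadric), and the hypothesis on $\brd{F}$ gives no control whatsoever over their indices. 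No inequality of the shape $\lambda'(F(q))\leq\lambda'(F)$ for function fields of quadrics is known, and establishing one would be a significant theorem on its own. So the main route, as written, does not close; what you actually need is only the much weaker statement that the specific forms $4\times\psi$ become hyperbolic over $K$, and that must be proved directly.

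The paper does this by the trace-form computation you sketch as a fallback, but applied to $\psi$ rather than to $\rho$ — i.e.\ with an algebra of degree $2^{n+1}$, not $2^{n+2}$; this is what makes the final dimension count work. Concretely, for $\psi=\lla a_1,b_1,\dots,a_n,b_n\rra$ set $A=\bigotimes_{i=1}^n(a_i,b_i)_F$ and $A'=A\otimes(-1,-1)_F$. Since $\deg A'=2^{n+1}$ and $\ind A'\leq 2^n$, we have $A'=B\otimes\matr{2}{F}$ with $\deg B=2^n$, whence $2\times\T_B\equiv \T_{A'}=\T_A\otimes\T_{(-1,-1)_F}\equiv 2\times(-\T_A)$; as both sides have dimension $2\cdot 4^n$ this is an isometry, and since $2\times\T_A$ and $2\times\T_B$ both represent $1$, the form $4\times\T_A$ is isotropic over $F$. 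On the other hand $4\times\T_Q\equiv 4\times\N_Q\bmod 8\times\W F$ for every quaternion algebra $Q$, so $4\times\T_A\equiv 4\times\psi\bmod 8\times\W F$. Over $K$ the ideal $8\times\W F$ dies, so $4\times\psi$ is Witt equivalent over $K$ to the isotropic form $4\times\T_A$ of the same dimension $4^{n+1}$, hence isotropic, hence hyperbolic; your subform-theorem argument then yields $4\times\psi=8\times\vartheta$ with $\vartheta$ a $(2n-1)$-fold Pfister form. In short, your diagnosis of the obstacle and your instinct for the remedy are both correct, but the proposal stops short of the computation that actually carries the proof.
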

\begin{proof}
By \Cref{T1}, we have $\I^{2n+2}F(\sqrt{-1})=0$. It follows by \cite[Theorem 34.22]{EKM08} that $\I^{2n+2}F$ is generated by the Witt equivalence classes of forms $2\times \rho$ with $(2n+1)$-fold Pfister forms $\rho$ over $F$.
In view of the hypothesis, \Cref{L} yields for such a form $\rho$ that $2\times \rho=4\times \psi$ for a $2n$-fold Pfister form $\psi$ over $F$. 
This shows that $2\times \I^{2n+1}F=4 \times\I^{2n}F$. 
Hence $\I^{2n+2}F$ is generated by the classes of forms $4\times \psi$ for $2n$-fold Pfister forms $\psi$ over $F$.

Consider $a_1,b_1,\dots,a_{n},b_{n}\in\mg{F}$ and $\psi=\lla a_1,b_1,\dots,a_n,b_n\rra$.
For $1\leq i\leq n$, let $Q_i=(a_i,b_i)_F$, $\T_i=\T_{Q_i}$ and $\N_i=\N_{Q_i}=\lla a_i,b_i\rra$. Then $\psi=\bigotimes_{i=1}^n \N_i$.
We will compute the trace forms of the central simple $F$-algebras 
\begin{eqnarray*}A=\bigotimes_{i=1}^{n}Q_i &\mbox{ and }& A'=A\otimes (-1,-1)_F\,.\end{eqnarray*}
Since $\deg A'=2^{n+1}$, it follows from the hypothesis on $F$ that $A'= B\otimes \matr{2}{F}$ for a central simple $F$-algebra $B$ of degree $2^n$.
Note that $\T_{(-1,-1)}\equiv \la -1,-1\ra$.
Hence 
$$2\times \T_B\equiv  \T_{A'}\equiv 2\times -\T_A\,.$$
Since $\deg A=\deg B$, this yields that $$2\times \T_B= 2\times -\T_A\,.$$
As $2\times \T_A$ and $2\times \T_B$ both represent $1$, we obtain that $4\times \T_A$ is isotropic.

Since $\T_A=\bigotimes_{i=1}^n \T_{Q_i}$ and $4\times \T_{Q_i}\equiv 4\times \N_{Q_i}\bmod 8$ in $\W F$ for $1\leq i\leq n$, we obtain that $$4\times \T_A\equiv 4\times \bigotimes_{i=1}^n \N_{Q_i}\equiv 4\times \psi\bmod 8$$ in $\W F$.
Since $4\times \T_A$ is isotropic and $4\times \psi$ is a Pfister form and since both forms have the same dimension, we obtain that $4\times \psi$ becomes hyperbolic over the function field of the quadratic Pfister form $8\times \la 1\ra$ over $F$.
Since $n\geq 1$, it follows by \cite[Theorem 22.5]{EKM08}
that $8\times \la 1\ra$ is a subform of $4\times \psi$.
By \cite[Proposition 24.1]{EKM08}, this yields that $4\times \psi= 8\times \vartheta$ for an $(2n-1)$-fold Pfister form $\vartheta$ over $F$.
As this argument applies to any $2n$-fold Pfister form $\psi$ over $F$, we conclude that $4\times \I^{2n} F= 8\times \I^{2n-1}F$.
Hence $\I^{2n+2}F=8\times\I^{2n-1}F$.
\end{proof}

\section*{Acknowledgements}
The author wishes to thank  the referee for critical and very constructive comments, which helped to improve the presentation.
Further thanks for comments and suggestions are due to Saurabh Gosavi, Bruno Kahn and Jean-Pierre Tignol. 

\section*{Conflict of interest statement}
The author declares that there is no conflict of interest.

\bibliographystyle{plain}

\end{document}